\newcommand{\adj}{\operatorname{adj}}
\newcommand{\Ball}{\operatorname{Ball}}
\newcommand{\Candle}{\operatorname{Candle}}
\newcommand{\CH}{\mathbb{C}\mathrm{H}}
\newcommand{\dd}{\mathrm{d}}
\newcommand{\hA}{\hat{A}}
\newcommand{\HH}{\mathbb{H}\mathrm{H}}
\newcommand{\htop}{\operatorname{h_{\mathrm{top}}}}
\newcommand{\hvol}{\operatorname{h_{\mathrm{vol}}}}
\newcommand{\hmu}{\operatorname{h_\mu}}
\newcommand{\hY}{\hat{Y}}
\newcommand{\inj}{\operatorname{inj}}
\newcommand{\LCD}{\operatorname{LCD}}
\newcommand{\OH}{\mathbb{O}\mathrm{H}}
\newcommand{\RH}{\R\mathrm{H}}
\newcommand{\Ric}{\operatorname{Ric}}
\newcommand{\R}{\mathbb{R}}
\newcommand{\RRic}{\operatorname{\sqrt{R}ic}}
\newcommand{\tM}{\tilde{M}}
\newcommand{\Tr}{\operatorname{Tr}}
\newcommand{\Vol}{\operatorname{Vol}}
\newcommand{\defeq}{\stackrel{\mathrm{def}}{=}}
\newcommand{\ie}{\i.e.}
\newcommand{\braket}[1]{\langle {#1} \rangle}
\newtheorem{theorem}{Theorem}[section]
\newtheorem{proposition}[theorem]{Proposition}
\theoremstyle{remark}
\newtheorem*{remark}{Remark}
\renewcommand{\sec}[1]{Section~\ref{#1}}
\newcommand{\thm}[1]{Theorem~\ref{#1}}
\newcommand{\prop}[1]{Proposition~\ref{#1}}
\newcommand{\eq}[2]{\begin{equation}\label{#1}#2\end{equation}}
\begin{document}

\title{A refinement of G\"unther's candle inequality}
\author{Beno\^{\i}t R. Kloeckner}
\email{benoit.kloeckner@ujf-grenoble.fr}
\affiliation{Institut Fourier, Universit\'e de Grenoble I}
\author{Greg Kuperberg}
\email{greg@math.ucdavis.edu}
\thanks{Supported by NSF grant CCF \#1013079.}
\affiliation{Department of Mathematics, University of California, Davis}

\begin{abstract}
\centerline{\textit{\normalsize Dedicated to our friends Sylvain Gallot and 
    Albert Schwarz}}
\vspace{\baselineskip}
We analyze an upper bound on the curvature of a Riemannian manifold, using
``$\RRic$" curvature, which is in between a sectional curvature bound and a
Ricci curvature bound.  (A special case of $\RRic$ curvature was previously
discovered by Osserman and Sarnak for a different but related purpose.)
We prove that our $\RRic$ bound implies G\"unther's inequality on the
candle function of a manifold, thus bringing that inequality closer in
form to the complementary inequality due to Bishop.
\end{abstract}

\maketitle

\section{Introduction}

Two important relations between curvature and volume in differential geometry
are Bishop's inequality \cite[\S11.10]{BC:book}, which is an upper bound on
the volume of a ball from a lower bound on Ricci curvature, and G\"unther's
inequality \cite{Gunther:volume}, which is a lower bound on volume from
an upper bound on \emph{sectional} curvature.  Bishop's inequality has
a weaker hypothesis then G\"unther's inequality and can be interpreted
as a stronger result.  The asymmetry between these inequalities is a
counterintuitive fact of Riemannian geometry.

In this article, we will partially remedy this asymmetry.  We will define
another curvature statistic, the root-Ricci function, denoted $\RRic$, and
we will establish a comparison theorem that is stronger than G\"unther's
inequality\footnote{We take the ``ic" in the Ricci tensor $\Ric$ to mean
taking a partial trace of the Riemann tensor $R$, but we take a square
root first.}.  $\RRic$ is not a tensor because it involves square roots
of sectional curvatures, but it shares other properties with Ricci curvature.

After the first version of this article was written, we learned that a
special case of $\RRic$ was previously defined by Osserman and Sarnak
\cite{OS:entropy}, for the different but related purpose of estimating
the entropy of geodesic flow on a closed manifold. (See \sec{s:exp}.)
Although their specific results are different, there is a common motivation
arising from volume growth in a symmetric space.

\subsection{Growth of the complex hyperbolic plane}
\label{s:complex}

Consider the geometry of the complex hyperbolic plane $\CH^2$.  In this
4-manifold, the volume of a ball of radius $r$ is
$$\Vol(B(r)) = \frac{\pi^2}{2} \sinh(r)^4 \sim \frac{\pi^2}{32}\exp(4r).$$
The corresponding sphere surface volume has a factor of $\sinh(2r)$ from
the unique complex line containing a given geodesic $\gamma$, which has
curvature $-4$, and two factors of $\sinh(r)$ from the totally real planes
that contain $\gamma$, which have curvature $-1$.  G\"unther's inequality
and Bishop's inequality yield the estimates
$$\frac{\pi^2}{48}\exp(3\sqrt{2} r) \gtrsim
    \Vol(B(r)) \gtrsim \frac{\pi^2}{12}\exp(3r).$$
The true volume growth of balls in $\CH^2$ (and in some other cases,
see \sec{s:exp}) is governed by the average of the square roots of the
negatives of the sectional curvatures.  This is how we define the $\RRic$
function, for each tangent direction $u$ at each point $p$ in $M$.

\subsection{Root-Ricci curvature}

Let $M$ be a Riemannian $n$-manifold with sectional curvature $K \le
\rho$ for some constant $\rho \ge 0$; we will implicitly assume that
$\rho\ge\kappa$.  For any unit tangent vector $u \in UT_pM$ with $p \in M$,
we define
$$\RRic(\rho,u) \defeq \Tr(\sqrt{\rho-R(\cdot,u,\cdot,u)}).$$
Here $R(u,v,w,x)$ is the Riemann curvature tensor expressed as a tetralinear
form, and the square root is the positive square root of a positive
semidefinite matrix or operator. 

The formula for $\RRic(\rho,u)$ might seem arcane at first glance.
Regardless of its precise form, the formula is both local (\ie, a function
of the Riemannian curvature) and also optimal in certain regimes.  Any such
formula is potentially interesting.  One important, simpler case is
$\rho=0$, which applies only to non-positively curved manifolds:
$$\RRic(0,u) = \Tr(\sqrt{-R(\cdot,u,\cdot,u)}).$$
In other words, $\RRic(0,u)$ is the sum of the square roots of the
sectional curvatures $-K(u,e_i)$, where $(e_i)$ is a basis of $u^\perp$
that diagonalizes the Riemann curvature tensor.  This special case was
defined previously by Osserman and Sarnak \cite{OS:entropy} (\sec{s:exp}),
which in their notation would be written $-\sigma(u)$.

For example, when $M=\CH^2$, one sectional curvature $K(u,e_i)$ is $-4$
and the other two are $-1$, so
$$\RRic(0,u)= \sqrt{4}+\sqrt{1}+\sqrt{1} = 4,$$
which matches the asymptotics in \sec{s:complex}.

In the general formula $\RRic(\rho,u)$, the parameter $\rho$ is important
because it yields sharper bounds at shorter length scales.  In particular,
in the limit $\rho\to\infty$, $\RRic(\rho,u)$ becomes equivalent to Ricci
curvature.  \sec{s:relations} discusses other ways in which $\RRic$ fits
the framework of classical Riemannian geometry.  Our definition for general
$\rho$ was motivated by our proof of the refined G\"unther inequality, more
precisely by equation \eqref{e:A}.  The energy \eqref{e:energy} of a curve
in a manifold can be viewed as linear in the curvature $R(\cdot,u,\cdot,u)$.
We make a quadratic change of variables to another matrix $A$, to express
the optimization problem as quadratic minimization with linear constraints;
and we noticed an allowable extra parameter $\rho$ in the quadratic change
of variables.

Another way to look at root-Ricci curvature is that it is equivalent to
an average curvature, like the normalized Ricci curvature $\Ric/(n-1)$,
but after a reparameterization.  By analogy, the $L^p$ norm of a function,
or the root-mean-square concept in statistics, is also an average of
quantities that are modified by the function $f(x) = x^p$.  In our case,
we can obtain a type of average curvature which is equivalent to $\RRic$
if we conjugate by $f(x) = \sqrt{\rho - x}$.  Taking this viewpoint,
we say that the manifold $M$ is of \emph{$\RRic$ class $(\rho,\kappa)$}
if $K \le \rho$, and if also
$$\frac{\RRic(\rho,u)}{n-1} \ge \sqrt{\rho-\kappa}$$
for all $u \in UTM$.  This is the $\RRic$ curvature analogue of the
sectional curvature condition $K \le \kappa$.

\subsection{A general candle inequality}

The best version of either G\"unther's or Bishop's inequality is not directly
a bound on the volume of balls in $M$, but rather a bound on the logarithmic
derivative of the \emph{candle function} of $M$.  Let $\gamma=\gamma_u$ be a
geodesic curve in $M$ that begins at $p = \gamma(0)$ with initial velocity
$u\in UT_pM$.  Then the candle function $s(\gamma,r)$ is by definition the
Jacobian of the map $u\mapsto \gamma_u(r)$. In other words, it is defined by
the equations
$$\dd q = s(\gamma_u,r)\, \dd u \,\dd r \qquad q = \gamma_u(r) = \exp_p(ru),$$
where $\dd q$ is Riemannian measure on $M$, $\dd r$ is Lebesgue measure
on $\R$, and $\dd u$ is Riemannian measure on the sphere $UT_pM$.
This terminology has the physical interpretation that if an observer is
at the point $q$ in $M$, and if a unit candle is at the point $p$, then
$1/s(\gamma,r)$ is its apparent brightness\footnote{Certain distant objects
in astronomy with known luminosity are called \emph{standard candles}
and are used to estimate astronomical distances.}.

The candle function $s_\kappa(r)$ of a geometry of constant curvature
$\kappa$ is given by
$$s_{\kappa}(r) = \begin{cases} \left(\frac{\sin(\sqrt{\kappa}r)}
    {\sqrt{\kappa}}\right)^{n-1} & \kappa > 0 \\
    r^{n-1} & \kappa = 0 \\ \left(\frac{\sinh(\sqrt{-\kappa}r)}
    {\sqrt{-\kappa}}\right)^{n-1} & \kappa < 0
    \end{cases}.$$

\begin{theorem} Let $M$ be a Riemannian $n$-manifold is of $\RRic$ class
$(\rho,\kappa)$ for some $\kappa \le \rho \ge 0$.  Then
$$(\log s(\gamma,r))' \ge (\log s_{\kappa}(r))'$$
for every geodesic $\gamma$ in $M$, when $2r\sqrt{\rho} \le \pi$.
\label{th:main} \end{theorem}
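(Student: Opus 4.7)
My plan is to prove the theorem by combining the variational characterization of the candle function with the paper's own quadratic substitution involving $\rho$. Writing $U = J'J^{-1}$ for the shape operator of the matrix Jacobi field $J$ along $\gamma$ with $J(0) = 0$ and $J'(0) = I$, I use the identity $(\log s(\gamma,r))' = \Tr(U(r))$ together with the second-variation formula
\[
    \Tr(U(r)) = \inf_{J(0)=0,\,J(r)=I}\int_0^r \bigl[\|J'\|_F^2 - \Tr(J^T R J)\bigr]\,\dd t,
\]
where the infimum runs over matrix-valued fields along $\gamma$ with these endpoint values and $R(t) = R(\cdot,\dot\gamma,\cdot,\dot\gamma)$ is regarded as a symmetric operator on $\dot\gamma^\perp$.

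Next I substitute $R = \rho I - A^2$ with $A = \sqrt{\rho I - R}\succeq 0$ (well defined because $K\le\rho$), which makes the integrand $\|J'\|_F^2 - \rho\|J\|_F^2 + \|AJ\|_F^2$ quadratic in $A$, so that the $\RRic$ hypothesis $\Tr(A(t))\ge(n-1)\sqrt{\rho-\kappa}$ enters linearly. The matrix Cauchy--Schwarz identity
\[
    (\Tr A)^2 = \bigl(\Tr(A B^{1/2} B^{-1/2})\bigr)^2 \le \Tr(A^2 B)\cdot\Tr(B^{-1}),
\]
applied pointwise with $B = JJ^T$, then gives $\|AJ\|_F^2 \ge (n-1)^2(\rho-\kappa)/\Tr((JJ^T)^{-1})$, eliminating $A$ from the energy.

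I then complete the square around the scalar model profile $u_\kappa(t) I$, where $u_\kappa = \phi_\kappa'/\phi_\kappa$ satisfies $u_\kappa' + u_\kappa^2 + \kappa = 0$ and $\phi_\kappa$ is the generalized sine for curvature $\kappa$ (so that $(\log s_\kappa)'(r) = (n-1)u_\kappa(r)$). Using this identity with integration by parts against the boundary condition $J(r)=I$ and the decay $\|J(t)\|_F^2 = O(t^2)$ at $t=0$, the reduced energy becomes
\[
    (n-1)u_\kappa(r) + \int_0^r \|J' - u_\kappa J\|_F^2\,\dd t - (\rho-\kappa)\int_0^r\Bigl[\Tr(JJ^T) - \tfrac{(n-1)^2}{\Tr((JJ^T)^{-1})}\Bigr]\,\dd t.
\]
The bracket is the arithmetic--harmonic gap of the eigenvalues of $JJ^T$, always non-negative and vanishing at $t = r$ since $J(r) = I$; for the isotropic test field $J(t) = (\phi_\kappa(t)/\phi_\kappa(r))I$ both corrections vanish identically, giving equality with $(\log s_\kappa)'(r)$.

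The main obstacle is the residual Wirtinger-type estimate that the kinetic correction $\int\|J' - u_\kappa J\|_F^2\,\dd t$ dominates the anisotropy defect for every admissible $J$. Under the substitution $J = \phi_\kappa M$ it becomes a weighted integral inequality for $M$ with $M(r) = (1/\phi_\kappa(r))I$, and the hypothesis $2r\sqrt\rho\le\pi$ enters exactly here: it is the condition that $\phi_\rho$ has no zero on $(0,r]$, which guarantees non-degeneracy of the associated one-dimensional Sturm comparison. I would close the argument by diagonalizing $MM^T$ along $\gamma$ and applying the Sturm comparison mode by mode, handling the $t$-dependence of the eigenframe through a rotating-frame calculation that absorbs the drift terms.
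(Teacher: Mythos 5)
Your setup coincides with the paper's through the elimination of $A$: the variational characterization of $\Tr(U(r))$, the substitution $R=\rho I-A^2$, and the Cauchy--Schwarz bound with $B=JJ^T$ are exactly the paper's minimization of $E(A,Y)$ over symmetric $A$ with $\Tr A=(n-1)\sqrt{\rho-\kappa}$ at fixed $Y$ (equation \eqref{e:AYY}). The completion of the square around $u_\kappa I$ is also correct, and it is encouraging that the linearized version of your residual inequality (perturbing $J=(\phi_\kappa/\phi_\kappa(r))(I+\epsilon F)$ with $F$ traceless diagonal) reduces, after the substitution $g=\phi_\kappa f$, to the Wirtinger inequality $\int_0^r (g')^2\ge (\pi/r)^2\int_0^r g^2$ and holds precisely when $2r\sqrt\rho\le\pi$. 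But the step you flag as the ``main obstacle'' --- that $\int_0^r\|J'-u_\kappa J\|_F^2\,\dd t$ dominates $(\rho-\kappa)$ times the integrated arithmetic--harmonic gap for \emph{every} admissible $J$ --- is the entire content of the theorem, and your plan for it does not go through as stated. Two concrete problems: (i) the reduction to diagonal $J$ is not routine, since in the polar decomposition $J=PW$ the term $\|J'-u_\kappa J\|_F^2$ contains cross terms between $P'$ and the angular velocity $W'W^T$ that have no evident sign; (ii) even for diagonal $J$, the defect $\Tr(JJ^T)-(n-1)^2/\Tr((JJ^T)^{-1})$ couples all the eigenvalues $\lambda_i$ nonlinearly and is not a sum of per-mode quadratic forms, so ``Sturm comparison mode by mode'' has nothing to compare against. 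A minor but telling slip: $2r\sqrt\rho\le\pi$ is not the condition that $\phi_\rho$ has no zero on $(0,r]$ (that is $r<\pi/\sqrt\rho$, which only yields positive-definiteness of the energy); it is the condition that $\phi_\rho'$ has no zero there.

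The paper closes the argument by an entirely different mechanism that avoids proving a global functional inequality: it establishes by compactness (Banach--Alaoglu/Arzela--Ascoli, Propositions \ref{p:bound1}--\ref{p:exists}) that a joint minimizer of $E$ in $(A,Y)$ exists, writes down the coupled Euler--Lagrange system $Y''=(A^2-\rho)Y$, $A=\alpha(YY^T)^{-1}/\Tr((YY^T)^{-1})$, shows the minimizer is diagonal, and then uses a phase-plane uniqueness argument for the scalar ODE $w''=\beta w^{-1}-\rho w$ to force all diagonal entries to coincide, so the minimizer is isotropic and the bound reduces to the constant-curvature case. The hypothesis $2r\sqrt\rho\le\pi$ enters there to guarantee $w'>0$, which is what makes the phase-plane argument work. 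To salvage your route you would either need to prove the residual matrix Wirtinger-type inequality directly (including the off-diagonal reduction and the full nonlinear AM--HM defect), or fall back on an existence-plus-rigidity argument of the paper's type; as written, the proposal reformulates the theorem cleanly but does not prove it.
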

The prime denotes the derivative with respect to $r$.

When $\rho = 0$, the conclusion of \thm{th:main} is identical to G\"unther's
inequality for manifolds with $K \le \kappa$, but the hypothesis is strictly
weaker.  When $\rho > 0$, the curvature hypothesis is weaker still, but the
length restriction is stronger.  The usual version of the inequality holds
up to a distance of $\pi/\sqrt{\kappa}$.  For our distance restriction,
we replace $\kappa$ with $\rho$ and divide by 2.

The rest of this article is organized as follows. In \sec{s:relations}
we give several relations between curvature bounds and volume comparisons.
In \sec{s:applications} we list applications of \thm{th:main},
and we prove \thm{th:main} in \sec{s:proof}.

\acknowledgments

The authors would like to thank Sylvain Gallot and John Hunter for especially
helpful conversations.

\section{Relations between conditions}
\label{s:relations}

\subsection{Candle conditions}

We first mention two interesting properties of the candle function
$s(\gamma,r)$:
\begin{enumerate}
\item[\textbf{1.}] $s(\gamma,r)$ vanishes when $\gamma(0)$ and $\gamma(r)$
are conjugate points.
\item[\textbf{2.}] 
The candle function is symmetric:  If $\bar{\gamma}(t) =
\gamma(r-t)$, then $s(\bar{\gamma},r) = s(\gamma,r).$
\end{enumerate}
The second property is not trivial to prove, but it is a folklore fact
in differential geometry \cite{Yau:isoperimetric}[Lem.~5] (and a standard
principle in optics).

Say that a manifold $M$ is $\Candle(\kappa)$ if the inequality 
$$s(\gamma,r) \ge s_\kappa(r)$$
holds for all $\gamma,r$; or $\LCD(\kappa)$, for \emph{logarithmic candle
derivative} \footnote{And not to be confused with liquid crystal displays.},
if the logarithmic condition
$$(\log s(\gamma,r))' \ge (\log s_\kappa(r))'$$
holds for all $\gamma,r$; or $\Ball(\kappa)$ if the volume inequality
$$\Vol(B(p,r)) \ge \Vol(B_\kappa(r))$$
holds for all $p$ and $r$; here $B_\kappa$ denotes a ball in the
simply connected space of constant curvature $\kappa$.  (If $\kappa
> 0$, then the first two conditions are only meaningful up to the
distance $\pi/\sqrt{\kappa}$ between conjugate points in the comparison
geometry.)  We also write $\Candle(\kappa,\ell)$, $\LCD(\kappa,\ell)$,
and $\Ball(\kappa,\ell)$ if the same conditions hold up to a distance of
$r = \ell$.

The logarithmic derivative $(\log s(\gamma,r))'$ of the candle function
has its own important geometric interpretation:  it is the mean curvature
of the geodesic sphere with radius $r$ and center $p = \gamma(0)$ at the
point $\gamma(r)$.  So it also equals $\Delta r$, where $\Delta$ is the
Laplace Beltrami operator, and $r$ is the distance from any point to $p$.
So if $M$ is $\LCD(\kappa)$, then we obtain the comparison $\Delta r \ge
\Delta_\kappa r_\kappa$, and the statement that spheres in $M$ are more
extrinsically curved than spheres in a space of constant curvature $\kappa$.

\subsection{Curvature and volume comparisons}

If $\kappa \le \rho = 0$, then we can organize the comparison
properties of an $n$-manifold $M$ that we have mentioned as follows:
\begin{multline}
K \le \kappa \implies \text{$\RRic$ class $(0,\kappa)$} \implies
    \LCD(\kappa) \\ \implies \Candle(\kappa) \implies \Ball(\kappa,\inj(M)),
\label{e:implies}
\end{multline}
where $\inj(M)$ is the injectivity radius of $M$.  The first implication is
elementary, while the second one is \thm{th:main}.  The third and fourth
implications are also elementary, given by integrating with respect to
length $r$.

If $\kappa \le \rho > 0$, then
\begin{multline*}
K \le \kappa \implies \text{$\RRic$ class $(\rho,\kappa)$} \implies
    \LCD(\kappa,\frac{\pi}{2\sqrt{\rho}}) \\
    \implies \Candle(\kappa,\frac{\pi}{2\sqrt{\rho}})
    \implies \Ball(\kappa,\ell),
\end{multline*}
where
$$\ell=\min(\inj(M),\frac{\pi}{2\sqrt{\rho}}).$$

Finally, for all $\ell>0$,
$$\Candle(\kappa,\ell) \implies \Ric \le (n-1)\kappa g,$$
where $g$ is the metric on $M$, because
\eq{e:deriv}{s(\gamma,r) = r^{n-1} - \Ric(\gamma'(0))r^n + O(r^{n+1}).}
In particular, in two dimensions, all of the implications in
\eqref{e:implies} are equivalences.

\subsection{Curvature bounds}

The function $\RRic(\rho)$ increases with $\rho$ faster than
$$(n-1)\sqrt{\rho-\kappa}$$
in the sense that for all $\kappa\le \rho \le \rho'$,
$$\text{$\RRic$ class $(\rho,\kappa)$} \implies \text{$\RRic$ class $(\rho',\kappa)$}.$$
In addition,
the conjugate version of root-Ricci curvature converges to normalized
Ricci curvature for large $\rho$:
$$\lim_{\rho \to \infty} \rho-\left(\frac{\RRic(\rho,u)}{n-1}\right)^2
    = \frac{\Ric(u,u)}{n-1} \quad \forall u\in UTM.$$
The corresponding limit $\rho \to \infty$ in \thm{th:main} has the
interpretation that the upper bound looks more and more like a bound
based on Ricci curvature at short distances.  This is an optimal limit
in the sense that Ricci curvature is the first non-trivial derivative of
$s(\gamma,r)$ at $r=0$ by \eqref{e:deriv}.  On the other hand, without the
length restriction, the limit $\rho \to \infty$ is impossible.  That limit
would be exactly G\"unther's inequality with Ricci curvature, but such an
inequality is not generally true.

Finally we can deduce a root-Ricci upper bound from a combination of
sectional curvature and Ricci bounds.  The concavity of the square root
function implies that given the value of $\Ric(u,u)$, the weakest possible
value of $\RRic(\rho,u)$ is achieved when $R(\cdot,u,\cdot,u)$ has one
small eigenvalue and all other eigenvalues equal.  For all $\kappa \le
\alpha \le \rho$, we then get a number $\beta=\beta(\kappa,\alpha,\rho)$,
decreasing in $\alpha$, such that
\eq{e:mixed}{K \le \alpha \text{ and } \Ric \le \beta g
    \implies \text{$\RRic$ class $(\rho,\kappa)$}.}
An explicit computation yields the optimal value
$$\beta = \rho+(n-2)\alpha - \left((n-1)\sqrt{\rho-\kappa}
    - (n-2)\sqrt{\rho-\alpha}\right)^2.$$
In particular, 
\begin{align*}
\beta(\kappa,\rho,\rho) &= (n-1)^2\kappa-n(n-1)\rho\\
\beta(\kappa,\kappa,\rho) &= (n-1)\kappa.
\end{align*}
In order to deduce $\RRic$ class $(\rho,\kappa)$ from classical curvature
upper bounds, we can therefore ask for the strong condition $K \le \kappa$
(which implies $\Ric \le (n-1)\kappa g$), or ask for the weaker $K \le
\rho$ together with $\Ric \le \beta(\kappa,\rho,\rho) g$, or choose from
a continuum of combined bounds on $K$ and $\Ric$.  Moreover, the above
calculation holds pointwise, so that in \eqref{e:mixed}, $\alpha$ can be
a function on $UTM$ instead of a constant.

\section{Applications}
\label{s:applications}

Most of the established applications of G\"unther's inequality are also
applications of \thm{th:main}.  The subtlety is that different
applications use different criteria in the chain of implications
\eqref{e:implies}.  We give some examples.  In general, let $\tM$ denote
the universal cover of $M$.

\subsection{Exponential growth of balls}
\label{s:exp}

One evident application of our result is to estimate the rate of growth of
balls, as already given by \eqref{e:implies}.  This is related
to the volume entropy of a closed Riemannian manifold $M$,
which is by definition
$$\hvol(M) \defeq  \lim_{r \to +\infty} \frac{\log \Vol B_{\tM}(p,r)}{r}.$$
By abuse of notation, we will use this same volume entropy 
expression when $M = \tM$ is simply connected rather than closed.
Since a hyperbolic space of curvature $\kappa<0$ and dimension $n$
has volume entropy $(n-1)\sqrt{-\kappa}$, \thm{th:main}
implies that when $K \le 0$,
\eq{e:vol}{\hvol(M) \ge \alpha \defeq \inf_u \RRic(0,u).}
The estimate \eqref{e:vol} is sharp for every rank one symmetric
space. (Recall that the rank one symmetric spaces are the
generalized hyperbolic spaces $\RH^n$, $\CH^n$, $\HH^n$, and
$\OH^2$.)  The reason is that the operator
$R(\cdot,\gamma',\cdot,\gamma')$ is constant along any geodesic $\gamma$.
So by the Jacobi field equation (\sec{s:proof}), the volume of $B(p,r)$
has factors of $\sinh \sqrt{\lambda_k} r$ for each eigenvalue $\lambda_k$
of $R(\cdot,\gamma',\cdot,\gamma')$.  So we obtain the estimate
$$\Vol B(p,r) \propto \prod_k (\sinh \sqrt{\lambda_k} r)
    \sim \exp(\alpha r).$$
    
However, although \eqref{e:vol} is a good estimate, it is superseded
by the previous discovery of $\RRic(0,u)$, for the specific purpose
of estimating entropies.  In addition to the volume entropy of $M$,
the geodesic flow on $M$ has a topological entropy $\htop(M)$ and a
measure-theoretic entropy $\hmu(M)$ with respect to any invariant measure
$\mu$.  Manning \cite{Manning:entropy} showed that $\htop(M) \ge \hvol(M)$
for any closed $M$, with equality when $M$ is nonpositively curved.
Goodwyn \cite{Goodwyn:compare} showed that $\htop(M) \ge \hmu(M)$ for any
$\mu$, with equality for the optimal choice of $\mu$.  (In fact he showed
this for any dynamical system.)

With these background facts, Osserman and Sarnak \cite{OS:entropy} defined
$\RRic(0,u)$ and established that
\eq{e:OSBW}{\hmu(M) \ge \int_{UTM} \RRic(0,u) d\mu(u)}
when $M$ is negatively curved, \ie, $K \le \kappa < 0$, and $\mu$ is
normalized Riemannian measure on $UTM$.  This result was generalized to
non-positive curvature by Ballmann and Wojtkowski \cite{BW:entropy}.

This use of $\RRic$ curvature concludes a topic that began with the
Schwarz-Milnor theorem \cite{Milnor:curvature,Schwarz:volume} that if $M$
is negatively curved, then $\pi_1(M)$ has exponential growth.  Part of their
result is that if $M$ is compact, then $\pi_1(M)$ has exponential growth
if and only if $\hvol(M) > 0$.  So equation \eqref{e:OSBW}, together with
Manning's theorem, shows that if $M$ is compact and nonpositively curved,
then either $M$ is flat, or the growth of $\pi_1(M)$ is bounded
below by \eqref{e:OSBW}.

Ballmann \cite{Ballmann:lectures} also showed that a non-positively curved
manifold $M$ of finite volume satisfies the weak Tits alternative: either $M$
is flat, or its fundamental group contains a non-abelian free group.  This is
qualitatively a much stronger version of the Schwarz-Milnor theorem, and
even its extension due to Manning, Osserman, Sarnak, Ballmann, and Wojtkowski.

\subsection{Isoperimetric inequalities}

Yau \cite{Yau:isoperimetric,BZ:inequalities} established that if $M$ is
complete, simply connected, and has $K \le \kappa < 0$, and $D \subseteq M$
is a domain, then $D$ satisfies a linear isoperimetric inequality:
$$\Vol(\partial D) \ge (n-1)\sqrt{-\kappa}\Vol(D).$$
His proof only uses a weakening of condition $\LCD(\kappa)$, namely that
$$(\log s(\gamma,r))' \ge (n-1)\sqrt{-\kappa}.$$
So \thm{th:main} yields Yau's inequality when $M$ is of $\RRic$ class
$(0,\kappa)$.

McKean \cite{McKean:spectrum} showed that the same weak $\LCD(\kappa)$
condition also implies a spectral gap
$$\lambda_0(\tilde M) \ge \frac{-\kappa n^2}{4}$$
for the first eigenvalue of the positive Laplace-Beltrami operator acting
on $L^2(M)$. This spectral gap follows from a Poincar\'e inequality
that is independently interesting:
$$\int_M f^2 \le \frac{4}{-\kappa n^2} \int_M |\nabla f|^2$$
for all smooth, compactly supported functions $f$.  McKean stated his
result under the hypothesis $K \le \kappa$; it has been generalized by Setti
\cite{Setti:ricci} and Borb\'ely \cite{Borbely:spectrum} to mixed sectional
and Ricci bounds; \thm{th:main} provides a further generalization. Note
in particular that Borb\'ely's result is optimal for complex hyperbolic
spaces (and we get the same bound in this case), but we get better bounds
for quaternionic and octonionic hyperbolic spaces.

Croke \cite{Croke:sharp} establishes the isoperimetric inequality for
a compact non-positively curved 4-manifold $M$ with unique geodesics.
In other words, if $B$ is a Euclidean 4-ball with
$$\Vol(M) = \Vol(B),$$
then
$$\Vol(\partial M) \ge \Vol(\partial B).$$
His proof only uses the condition $\Candle(0)$, in fact only for maximal
geodesics between boundary points\footnote{We credit \cite{Croke:sharp}
as our original motivation for this article.}.  So, Croke's theorem also
holds if $M$ is of $\RRic$ class $((\frac{\pi}{2 L})^2,0)$,
where $L$ is the maximal length of a geodesic; for any given $L$, this
curvature bound is weaker than $K \le 0$.  It is a well-known conjecture
that if $M$ is $n$-dimensional and non-positively curved, then the
isoperimetric inequality holds.  The conjecture can be attributed to Weil
\cite{Weil:negative}, because his proof in dimension $n=2$ initiated the
subject.  More recently, Kleiner \cite{Kleiner:isoperimetric} established
the case $n=3$.  We are led to ask whether Weil's isoperimetric conjecture
still holds for $\Candle(0)$ or $\LCD(0)$ manifolds.

In a forthcoming paper, we will partially generalize Croke's result to
signed curvature bounds.  In these generalizations, the main direct
hypotheses are the $\Candle(\kappa)$ and $\LCD(\kappa)$ conditions,
which are natural but not local.  \thm{th:main} provides important local
conditions under which these hypotheses hold.

\subsection{Almost non-positive curvature}

As mentioned above, one strength of root-Ricci curvature estimates is
that we can adjust the parameter $\rho$; however, most of the applications
mentioned so far are in the non-positively curved case $\rho = 0$.  It is
therefore natural to ask to which extent manifolds with almost non-positive
sectional curvature and negative root-Ricci curvature behave like negatively
curved manifolds.

More precisely, suppose that $M$ is compact, has diameter $\delta$ and
satisfies both curvature bounds
$$K\le \rho \quad \mbox{and} \quad \RRic(\rho) \le \kappa. $$
Say that $M$ is \emph{almost non-positively curved} if $0<\rho\ll
\delta^{-2}$, and that $M$ is \emph{strongly negatively root-Ricci curved}
if $\kappa\ll -\delta^{-2}$.  Under these assumptions, \thm{th:main}
shows that the balls in $\tM$ grow exponentially up to a large multiple of
the diameter $\delta$.  We conjecture that if $M$ is also compact, then
$\pi_1(M)$ has exponential growth or equivalently that $M$ has positive
volume entropy.

In light of Ballmann's result that a non-positively curved manifold $M$
of finite volume satisfies the weak Tits alternative, we ask whether a
compact, almost-non-positively curved, strongly negatively root-Ricci curved
manifold must contain a non-abelian free group in its fundamental group.
We conjecture at the very least that an almost non-positively curved manifold
with strongly negative root-Ricci curvature cannot be a torus. This would
be an interesting complement to the result of Lohkamp \cite{Lohkamp:metrics}
that every closed manifold of dimension $n \ge 3$ has a Ricci-negative
metric.

\section{The proof}
\label{s:proof}

In this section, we will prove \thm{th:main}.  The basic idea is to analyze
the energy functional that arises in a standard proof of G\"unther's
inequality, with the aid of the change of variables $R = A^2 - \rho I$.

Using the Jacobi field model, \thm{th:main} is really a result about
linear ordinary differential equations.  The normal bundle to the geodesic
$\gamma(t)$ can be identified with $\R^{n-1}$ using parallel transport.
Then an orthogonal vector field $y(t)$ along $\gamma$ is a Jacobi field
if it satisfies the differential equation
\eq{e:vector}{y'' = -R(t)y,}
where
$$R(t) = R(\cdot,u(t),\cdot,u(t))$$
is the sectional curvature matrix and $u(t) = \gamma'(t)$ is the unit
tangent to $\gamma$ at time $t$.  By the first Bianchi identity, $R(t)$ is
a symmetric matrix.  The candle function $s(r) = s(\gamma,r)$ is determined
by a matrix solution
\eq{e:matrix}{Y'' = -R(t)Y \qquad Y(0) = 0}
by the formula
$$s(r) = \frac{\det Y(r)}{\det Y'(0)}.$$
Its logarithmic derivative is given by
$$(\log s(r))' = \frac{s'(r)}{s(r)}
    = \frac{(\det Y)'(r)}{\det Y(r)}.$$
All invertible solutions $Y(r)$ to \eqref{e:matrix} are equivalent
by right multiplication by a constant matrix, and yield the same value for
$s(r)$ and its derivative.  In particular, if we let $Y(r) = I$, then the
logarithmic derivative simplifies to
$$(\log s(r))' = \Tr(Y'(r)).$$

Following a standard proof of G\"unther's inequality
\cite{GHL:riemannian}[Thm.~3.101], we define an energy functional whose
minimum, remarkably, both enforces \eqref{e:matrix} and minimizes the
objective $(\log s(r))'$. Namely, we assume Dirichlet boundary conditions
$$y(0) = 0 \qquad y(r) = v,$$
and we let
\eq{e:energy}{E(R,y) = \int_0^r \left(\braket{y',y'}
    - \braket{y,Ry}\right) \,\dd t.}
By a standard argument from calculus of variations, the critical points
of $E(R,y)$ are exactly the solutions to \eqref{e:vector} with the given
boundary conditions.

We can repeat the same calculation with the matrix solution 
$$Y(0) = 0 \qquad Y(r) = I,$$
with the analogous energy
$$E(R,Y) = \int_0^r \left(\braket{Y',Y'} - \braket{Y,RY}\right) \,\dd t.$$
Here the inner product of two matrices is the Hilbert-Schmidt inner product
$$\braket{A,B} = \Tr(A^TB).$$
Moreover, if $Y$ is a solution to \eqref{e:matrix}, then $E(R,Y)$ simplifies to
$(\log s(r))'$ by integration by parts:
\begin{align*}
E(R,Y) &= \int_0^r \left(\braket{Y',Y'} - \braket{Y,RY}\right) \,\dd t \\
    &= \braket{Y(r),Y'(r)} - \braket{Y(0),Y'(0)}
    - \int_0^r \braket{Y,Y''+RY} \,\dd t \\
    &= \braket{I,Y'(r)} - 0 - 0 = \Tr(Y'(r)) = (\log s(r))'.
\end{align*}
Thus, our goal is to minimize $E(R,Y)$ with respect to both $Y$ and $R$.
We want to minimize with respect to $Y$ in order to solve \eqref{e:matrix}.
Then for that $Y$, we want to minimize with respect to $R$ to prove
\thm{th:main}.

The following proposition tells us that \eqref{e:vector} or \eqref{e:matrix}
has a unique solution with Dirichlet boundary conditions, and that it is
an energy minimum.  Here and below, recall the matrix notation $A \le B$
(which was already used for Ricci curvature in the introduction) to express
the statement that $B-A$ is positive semidefinite.

\begin{proposition} If $R \le \rho I$, and if $y$ is continuous with an $L^2$
derivative, then $E(R,y)$ is a positive definite quadratic function of $y$
when $\sqrt{\rho}r < \pi$, with the Dirichlet boundary conditions $y(0)
= y(r) = 0$.
\label{p:posdef} \end{proposition}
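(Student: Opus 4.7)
The plan is to reduce this to the classical scalar Wirtinger (Poincaré) inequality on the interval $[0,r]$ with Dirichlet boundary data. The form $E(R,y)$ is manifestly quadratic in $y$, so positive definiteness on $\{y(0)=y(r)=0,\ y'\in L^2\}$ just means that $E(R,y)>0$ unless $y\equiv 0$.

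First, I would use the hypothesis $R(t)\le\rho I$ pointwise, which gives $\braket{y(t),R(t)y(t)}\le\rho|y(t)|^2$, hence after integrating
\[
E(R,y) \ge \int_0^r |y'|^2\,\dd t - \rho\int_0^r |y|^2\,\dd t.
\]
Then I would invoke the Wirtinger inequality: for every scalar $f\in H^1_0([0,r])$,
\[
\int_0^r (f')^2\,\dd t \ge \frac{\pi^2}{r^2}\int_0^r f^2\,\dd t,
\]
with equality only for $f(t)=c\sin(\pi t/r)$. Applying this coordinate by coordinate in the parallel-transported trivialization $y:[0,r]\to\R^{n-1}$ yields the same bound for $|y|$ in place of $|f|$. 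Combining,
\[
E(R,y) \ge \left(1-\frac{\rho r^2}{\pi^2}\right)\int_0^r |y'|^2\,\dd t.
\]
Since $\sqrt{\rho}r<\pi$, the coefficient is strictly positive. Thus $E(R,y)\ge 0$, and equality forces $y'\equiv 0$ in $L^2$; but then $y$ is a.e.\ constant, and the boundary condition $y(0)=0$ forces $y\equiv 0$.

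There is no real obstacle beyond correctly invoking Wirtinger; the threshold $\sqrt{\rho}r<\pi$ is exactly the sharp point at which the best Dirichlet constant $\pi^2/r^2$ ceases to dominate the destabilizing coefficient $\rho$. Equivalently, one could expand $y$ in the Dirichlet Fourier basis $\{\sin(k\pi t/r)\}_{k\ge 1}\otimes\R^{n-1}$, after which $E(\rho I,y)$ diagonalizes with eigenvalues proportional to $k^2\pi^2/r^2-\rho\ge \pi^2/r^2-\rho>0$, giving the same conclusion and making clear that the bound is sharp in the length restriction.
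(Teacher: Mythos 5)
Your proof is correct and follows essentially the same route as the paper: reduce to the constant-curvature comparison $E(\rho I,y)\le E(R,y)$ and then observe that the Dirichlet form $\int_0^r(|y'|^2-\rho|y|^2)\,\dd t$ is positive definite precisely when $\sqrt{\rho}\,r<\pi$, which is the content of the sine-basis diagonalization you mention at the end (the paper computes $E(\rho,y_k)=(\pi^2k^2-r^2\rho)/r>0$ directly; your Wirtinger formulation is the same fact packaged as the first Dirichlet eigenvalue).
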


\begin{proof} 
Let
$$E(\rho,y) = E(\rho I,y) = \int_0^r \left(\braket{y',y'}
    - \rho\braket{y,y}\right)\,\dd t$$
be the corresponding energy of the comparison case with constant curvature
$\rho$.  (Recall that the ultimate comparison is with constant curvature
$\kappa$, but to get started we use $\rho$ instead.)  Then
$$E(\rho,y) \le E(R,y),$$
so it suffices to show that $E(\rho,y)$ is positive definite.  When $\rho
= 0$, $E(\rho,y)$ is manifestly positive definite.  Otherwise $E(\rho,y)$
is diagonalized in the basis of functions
$$y_k(t) = \sin(\frac{\pi k t}{r})$$
with $k \ge 1$. A direct calculation yields
$$E(\rho,y_k) = \frac{\pi^2 k^2-r^2\rho}{r} > 0,$$
as desired.
\end{proof}

\begin{remark} There is also a geometric reason that the comparison case
$E(\rho,y)$ is positive definite:  When $\rho = 0$, a straight line segment
in Euclidean space is a minimizing geodesic; when $\rho > 0$, the same is
true of a geodesic arc of length $r < \pi/\sqrt{\rho}$ on a sphere with
curvature $\sqrt{\rho}$.  We give a direct calculation to stay in the
spirit of ODEs.
\end{remark}

\begin{proposition} Let $\rho$ and $r < \pi/\sqrt{\rho}$ be fixed and suppose
that $R \le \rho I$.  Then $s(r)$ and $(\log s(r))'$ are both bounded below.
\label{p:bound1} \end{proposition}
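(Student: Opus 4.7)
The plan is to identify $(\log s(r))'$ with the minimum of the matrix energy $E(R,Y)$ over matrix fields $Y$ with boundary data $Y(0)=0$ and $Y(r)=I$, and then to compare pointwise with the constant-curvature case $R=\rho I$.

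The first step is to upgrade Proposition~\ref{p:posdef} to an existence-of-minimizer statement for the matrix form of $E$. The homogeneous (Dirichlet) part of $Y\mapsto E(R,Y)$ acts row-by-row like the scalar form in Proposition~\ref{p:posdef}, and the sine diagonalization carried out there exhibits a uniform positive spectral gap of order $\pi^2 - r^2\rho > 0$. So $E(R,\cdot)$ is strictly convex and coercive on the affine set of admissible $Y$, and therefore attains a unique minimum; by standard first-variation arguments, that minimizer solves the matrix Jacobi equation \eqref{e:matrix} with $Y(0)=0$, $Y(r)=I$. In particular $\det Y(r)\neq 0$, so no conjugate point occurs in $(0,r]$ and $s(r)>0$. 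The integration-by-parts identity established immediately above Proposition~\ref{p:posdef} then gives $(\log s(r))' = \min_Y E(R,Y)$.

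The second step is the pointwise comparison. Since $R(t)\le\rho I$ as symmetric matrices, $E(R,Y)\ge E(\rho I,Y)$ for every admissible $Y$, and taking infima
$$(\log s(r))' \ge \min_Y E(\rho I,Y) = (\log s_\rho(r))' = (n-1)\sqrt{\rho}\cot(\sqrt{\rho}r),$$
a finite real number since $r<\pi/\sqrt{\rho}$. This is the desired lower bound on $(\log s(r))'$. Since the same argument applies on each subinterval $(0,t]$ with $0<t\le r$, I can integrate the pointwise inequality from $0$ to $r$. The function $\log s(t)$ itself diverges at $t=0$, but the difference $\log s(t)-\log s_\rho(t)$ extends continuously to $0$ there because both sides have the same $(n-1)\log t$ asymptotics; integrating yields $s(r)\ge s_\rho(r)>0$.

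The only non-routine point is the coercivity upgrade in the first paragraph, which I would anticipate as the main obstacle. In practice, however, it follows cleanly from the explicit spectral gap computed in the proof of Proposition~\ref{p:posdef}, so I expect only bookkeeping work (choosing the right Sobolev space of matrix-valued curves and extending the scalar diagonalization to matrices row-by-row) and no substantive mathematical obstruction.
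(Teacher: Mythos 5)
Your proposal is correct and follows essentially the same route as the paper: compare $E(R,Y)\ge E(\rho I,Y)$ termwise, identify the minimum of the left side over admissible $Y$ with $(\log s(r))'$ via Proposition~\ref{p:posdef} and the integration-by-parts identity, bound it by the constant-curvature minimum $(\log s_\rho(r))'$, and integrate to bound $s(r)$. The extra detail you supply on coercivity, existence of the minimizer, and the removable singularity of $\log s(t)-\log s_\rho(t)$ at $t=0$ is sound bookkeeping that the paper leaves implicit.
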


\begin{proof} We will simply prove the usual G\"unther inequality.
As in the proof of \prop{p:posdef},
$$E(R,Y) \ge E(\rho,Y)$$
for all $R$ and $Y$ with $Y(0) = 0$ and $Y(r) = I$.  For each fixed $R$,
the minimum of the left side is $(\log s(r))'$.  The minimum of the right
side (which may occur for a different $Y$, but no matter) is
$(\log s_{\rho}(r))'$, which is a positive number.  We obtain
the same conclusion for $s(r)$ by integration.
\end{proof}

\begin{proposition} Assume the hypotheses of \prop{p:bound1}.  If $R$ is
$L^\infty$, then the solution $Y$ to \eqref{e:matrix} is bounded uniformly,
\ie, with a bound that depends only on $||R||$ (and $r$ and $\rho$).
Also $Y'$ is uniformly bounded and Lipschitz, and $Y''$ is uniformly
bounded and $L^\infty$.
\label{p:bound2} \end{proposition}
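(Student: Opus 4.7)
The plan is to reduce the boundary value problem to an initial value problem, where standard ODE estimates apply. Let $Y_0$ be the unique matrix solution of $Y_0'' = -RY_0$ with initial conditions $Y_0(0) = 0$ and $Y_0'(0) = I$; existence and uniqueness hold by Carath\'eodory theory since $R \in L^\infty$. Provided $Y_0(r)$ is invertible, the sought solution is $Y(t) = Y_0(t)\,Y_0(r)^{-1}$, so bounding $Y$, $Y'$, and $Y''$ reduces to bounding $Y_0$, $Y_0'$, and $Y_0(r)^{-1}$.

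First I would rewrite the ODE as a first-order system for the pair $(Y_0, Y_0')$, whose coefficient matrix is controlled in $L^\infty$ by a function of $\|R\|_\infty$. Gronwall's inequality then yields uniform bounds on $\|Y_0\|_\infty$ and $\|Y_0'\|_\infty$ on $[0, r]$ depending only on $\|R\|_\infty$ and $r$. Next, I would invoke \prop{p:bound1}, which gives a positive lower bound $s_\rho(r) > 0$ on the candle function; since $Y_0'(0) = I$, the candle function associated with $Y_0$ equals $\det Y_0(r)$. To convert this determinant lower bound into an operator-norm bound on $Y_0(r)^{-1}$, I would use the elementary singular-value inequality $\|A^{-1}\| \le \|A\|^{n-2}/|\det A|$, valid for every invertible $(n-1) \times (n-1)$ matrix $A$. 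Combined with the Gronwall upper bound on $\|Y_0(r)\|$, this yields a uniform bound on $\|Y_0(r)^{-1}\|$ depending only on $\|R\|_\infty$, $r$, $\rho$, and $n$.

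Putting the pieces together, $Y = Y_0\,Y_0(r)^{-1}$ and $Y' = Y_0'\,Y_0(r)^{-1}$ are uniformly bounded on $[0, r]$; substituting into $Y'' = -RY$ yields $\|Y''\|_\infty \le \|R\|_\infty \|Y\|_\infty$, which in turn makes $Y'$ Lipschitz with that same constant. The main obstacle is the middle step: controlling $Y_0(r)^{-1}$ in operator norm, since Gronwall alone only bounds $\|Y_0(r)\|$ from above while \prop{p:bound1} alone only bounds $\det Y_0(r)$ from below. The singular-value inequality is exactly what bridges this gap, at the cost of a factor $\|Y_0(r)\|^{n-2}$ in the final estimate.
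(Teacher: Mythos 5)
Your proof is correct and takes essentially the same route as the paper's: reduce the Dirichlet problem to the initial value problem $Y_0(0)=0$, $Y_0'(0)=I$, bound $Y_0$ and $Y_0'$ by Gr\"onwall, and control $Y_0(r)^{-1}$ by combining the determinant lower bound from \prop{p:bound1} with the Gr\"onwall upper bound on $\|Y_0(r)\|$. The only cosmetic difference is that the paper writes $Y_0(r)^{-1} = \adj(Y_0(r))\det(Y_0(r))^{-1}$ where you use the equivalent singular-value estimate $\|A^{-1}\| \le \|A\|^{n-2}/|\det A|$.
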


\begin{proof} In this proposition and nowhere else, it is more convenient
to assume the initial conditions
$$\hY(0) = 0 \qquad \hY'(0) = I$$
rather than Dirichlet boundary conditions.  The fact that $\hY$ and its
derivatives are uniformly bounded, with these initial conditions, is exactly
Gr\"onwall's inequality.  To convert back to Dirichlet boundary conditions,
we want to instead bound
$$Y(t) = \hY(t)\hY(r)^{-1}.$$
This follows from \prop{p:bound1} by the formula
$$\hY(r)^{-1} = \adj(\hY(r))\det(\hY(r))^{-1},$$
where $\adj$ denotes the adjugate of a matrix.

Finally, $Y''(t)$ is $L^\infty$ and uniformly bounded because $Y(t)$
satisfies \eqref{e:matrix}.  Also $Y'(0) = \hY(r)^{-1}$ is uniformly bounded,
so we can integrate to conclude that $Y'(t)$ is uniformly bounded and
Lipschitz.
\end{proof}

To prove \thm{th:main}, we want to minimize $(\log s(r))'$ or $E(R,Y)$
over all $R$ such that
\eq{e:Rcond}{R \le \rho I \qquad \Tr(\sqrt{\rho I-R})
    \ge \alpha \defeq \sqrt{\rho-\kappa}.}
To better understand this minimization problem, we make a change of
variables.   Let $A(t)$ be a symmetric matrix such that
\eq{e:A}{R(t) = \rho I - A(t)^2  \qquad \Tr(A(t)) \ge \alpha.}
In order to know that every $R(t)$ is realized, we can let
$$A = \sqrt{\rho I-R}$$
be the positive square root of $\rho I-R$.  Even if $A$ is not positive
semidefinite, $R(t)$ still satisfies \eqref{e:Rcond}.  This simplifies the
optimization problem:  in the new variable $A$, the semidefinite hypothesis
can be waived.

Now the energy function becomes:
\begin{align*}
E(A,Y) &= \int_0^r \left(\braket{Y',Y'}
    - \braket{Y,(\rho-A^2)Y}\right)\,\dd t \\
    &= \int_0^r \left( \Tr((Y')^TY') + \Tr(Y^TA^2Y)
    - \rho\Tr(Y^TY)\right)\,\dd t.
\end{align*}
For the moment, fix $Y$ and let $Z = YY^T$.  Then as a function of $A$,
$$E(A) = \int_0^r \Tr(A^2YY^T)\, \dd t + \text{constant}.$$
Since $YY^T$ is symmetric and strictly positive definite, $E$ is a
positive-definite quadratic function of $A$, and we can directly solve
for the minimum as
\eq{e:AYY}{A = \frac{\alpha (YY^T)^{-1}}{\Tr((YY^T)^{-1})}.}
Even though we waived the assumption that $A$ is positive semidefinite,
minimization restores it as a conclusion.  Moreover,
\eq{e:trap}{\Tr(A) = \Tr(\sqrt{\rho I-R}) = \alpha.}

\begin{proposition} With the hypotheses \eqref{e:Rcond}, and if $r <
\pi/\sqrt{\rho}$, a minimum of $(\log s(r))'$ exists. Equivalently,
a joint minimum of $E(A,Y)$ or $E(R,Y)$ exists.
\label{p:exists} \end{proposition}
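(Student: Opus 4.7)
The plan is to apply the direct method of the calculus of variations. Take a minimizing sequence $(A_n,Y_n)$ satisfying the Dirichlet conditions $Y_n(0)=0$, $Y_n(r)=I$ and the trace constraint $\Tr(A_n(t))\ge\alpha$ almost everywhere, with $E(A_n,Y_n)\to\inf E$. By \prop{p:bound1} this infimum is finite. The core task is to extract a subsequential limit $(A,Y)$ and verify lower semicontinuity of $E$ along it.

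To gain compactness I would first perform two energy-decreasing replacements on each term of the sequence. First, with $Y_n$ fixed, replace $A_n$ pointwise by the value from \eqref{e:AYY}, namely $A_n(t):=\alpha(Y_nY_n^T)^{-1}/\Tr((Y_nY_n^T)^{-1})$; by \eqref{e:trap} this preserves the trace constraint (with equality) and decreases $E$. Crucially the operator norm of the new $A_n(t)$ is bounded by $\alpha$ pointwise, because $\lambda_{\max}(M)\le\Tr(M)$ for any positive-definite matrix $M$. Second, replace $Y_n$ by the solution of \eqref{e:matrix} for the updated $R_n=\rho I-A_n^2$, which decreases $E$ further. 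Now $\|R_n\|_\infty\le\rho+\alpha^2$ is uniformly bounded, so \prop{p:bound2} gives uniform bounds on $Y_n,Y_n',Y_n''$. Arzel\`a--Ascoli then yields a subsequence along which $Y_n\to Y$ and $Y_n'\to Y'$ in $C^0$, while Banach--Alaoglu gives weak-$*$ convergence $A_n\to A$ in $L^\infty$.

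To pass to the limit, the boundary conditions and the linear trace constraint $\Tr(A)\ge\alpha$ survive trivially. The kinetic and $\rho$ terms of $E$ converge by strong convergence of $Y_n$ and $Y_n'$. The delicate term is the one quadratic in $A$; using the symmetry of $A$ I would rewrite it as
$$\int_0^r\Tr(A^2YY^T)\,\dd t=\int_0^r\Tr\bigl((AY)(AY)^T\bigr)\,\dd t=\|AY\|_{L^2}^2.$$
Since $A_n\rightharpoonup A$ weakly in $L^2$ and $Y_n\to Y$ strongly in $L^\infty$, the product $A_nY_n$ converges weakly to $AY$ in $L^2$, and weak lower semicontinuity of the $L^2$ norm gives $\|AY\|_{L^2}^2\le\liminf_n\|A_nY_n\|_{L^2}^2$. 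Combining, $E(A,Y)\le\liminf_nE(A_n,Y_n)=\inf E$, so $(A,Y)$ is a minimizer. The main obstacle is precisely this lower-semicontinuity step: since $A\mapsto A^2$ is not weakly continuous, passing to the limit requires factoring the term as $\|A_nY_n\|_{L^2}^2$ and using the strong convergence of $Y_n$ to upgrade the weak convergence of $A_n$ to weak convergence of the product.
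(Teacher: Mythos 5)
Your argument is correct in substance, but it reaches the minimum by a genuinely different mechanism than the paper. Both proofs use the same compactness inputs (the normalization \eqref{e:AYY}--\eqref{e:trap} to get a uniform bound, then \prop{p:bound2}, Arzel\`a--Ascoli for $Y$ and $Y'$, Banach--Alaoglu for the curvature variable). The difference is in how the troublesome term is passed to the limit. The paper keeps the variable $R$, in which the energy is \emph{linear}, and integrates by parts so that $E$ depends only on the antiderivative $\smallint R$ and on $Y'$; since $\smallint R$ converges uniformly, $E$ is genuinely \emph{continuous} on a compact family and the nonlinearity $A\mapsto A^2$ never enters the limit argument. You instead keep the variable $A$, observe the pointwise operator bound $\|A_n(t)\|\le\alpha$ coming from $\lambda_{\max}\le\Tr$ for the normalized $A_n$, and handle the quadratic term by factoring $\int\Tr(A^2YY^T)=\|AY\|_{L^2}^2$ and invoking weak lower semicontinuity of the $L^2$ norm (weak $\times$ strong $\Rightarrow$ weak for the product $A_nY_n$). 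Your route buys a cleaner treatment of the constraint --- $\Tr(A)\ge\alpha$ is linear in $A$ and so survives weak-$*$ limits trivially, whereas the constraint $\Tr(\sqrt{\rho I-R})\ge\alpha$ is only preserved under weak-$*$ limits of $R$ because the square root is operator concave, a point the paper leaves implicit. The paper's route buys continuity rather than mere lower semicontinuity, at the cost of the integration-by-parts rewriting.

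One wrinkle in your ordering of the two replacements: substituting $A_n:=\alpha(Y_nY_n^T)^{-1}/\Tr((Y_nY_n^T)^{-1})$ presupposes that $Y_n(t)$ is invertible for $t\in(0,r]$, which an arbitrary competitor in a minimizing sequence need not satisfy (it could be singular at an interior time; at $t=0$ the ratio stays bounded but need not have a well-defined limit). This is easily repaired by performing the $Y$-replacement first: for any admissible $R_n$, the Jacobi solution of \eqref{e:matrix} decreases the energy by \prop{p:posdef} and is nonsingular on $(0,r]$ because $R_n\le\rho I$ and $r<\pi/\sqrt{\rho}$ exclude conjugate points; then replace $A_n$, then replace $Y_n$ once more to invoke \prop{p:bound2} for the now uniformly bounded $R_n$. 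The paper glosses over the same invertibility point when deriving \eqref{e:AYY}, so this is a shared and fixable omission rather than a defect of your approach.
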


\begin{proof} The above calculation lets us assume \eqref{e:trap},
which means that $R$ is uniformly bounded.  By \prop{p:bound2}, so is
$Y''$.  We can restrict to a set of pairs $(R,Y'')$ of class $L^\infty$,
which is compact in the weak-* topology by the Banach-Alaoglou theorem.
Equivalently, we can restrict to a uniformly bounded, uniformly Lipschitz
set of pairs $(\smallint R,Y')$, which is compact in the uniform topology by
the Arzela-Ascoli theorem.  By integration by parts, we can write
\begin{align*}
E(R,Y) &= \int_0^r \left(\braket{Y',Y'} - \braket{Y,RY}\right) \,\dd t. \\
    &= \left[\braket{Y,(\smallint R) Y} \right]_0^r +
    \int_0^r \left(\braket{Y',Y'} \,\dd t + 2\braket{Y',(\smallint R) Y} \right) \,\dd t.
\end{align*}
Thus the energy is continuous as a function of $\smallint R$ and $Y'$ and
has a minimum on a compact family.
\end{proof}

\prop{p:exists} reduces \thm{th:main} to solving the following
non-linear matrix ODE, which is obtained by combining \eqref{e:matrix}
and \eqref{e:AYY}:
\begin{align*}
Y'' &= (A^2-\rho)Y & A &= \frac{\alpha (YY^T)^{-1}}{\Tr((YY^T)^{-1})} \\
Y(0) &= 0 & Y(r) &= I.
\end{align*}
\prop{p:exists} tells us that this ODE has at least one solution; we will
proceed by finding all solutions with the given boundary conditions.
First, if we suppress the boundary condition $Y(r) = I$, the solutions
$Y(t)$ are invariant under both left and right multiplication by $O(n-1)$.
So we can write
$$Y(t) = U\hY(t)V,$$
where $\hY'(0)$ is diagonal with positive entries.  In this case $\hA(0)$
is also diagonal, and we obtain that $\hY(t)$ is diagonal for all $t$, and
with positive entries because the entries cannot cross 0.  Therefore $UV
= I$, because the identity is the only diagonal orthogonal matrix with
positive entries.

So we can assume that $Y = \hY$, with diagonal entries
$$\lambda_1(t), \lambda_2(t), \ldots, \lambda_{n-1}(t) > 0.$$
Each of these entries satisfies the same scalar ODE,
\eq{e:scalar}{w'' = \beta(t) w^{-1} - \rho w \qquad w(0) = 0 \qquad w(r) = 1,}
where
$$\beta(t) = \frac{\alpha}{\Tr((Y(t)Y(t)^T)^{-1})^2}.$$
We claim that if $w > 0$, then $w' > 0$ as well.  If $\rho = 0$,
then this is immediate.  Otherwise, a positive solution $w(t)$ satisfies
$$w(t) > \frac{\sin(\sqrt{\rho}t)}{\sin(\sqrt{\rho}r)} \qquad
w'(t) > \frac{\sqrt{\rho}\cos(\sqrt{\rho}t)}{\sin(\sqrt{\rho}r)},$$
because the right side is the solution to $w'' = - \rho w$ with the same
boundary conditions.  So we obtain that $w' > 0$ provided that
$$r < \frac{\pi}{2\sqrt{\rho}}.$$
(This is where we need half of the distance allowed in the usual form of
G\"unther's inequality.)

To complete the proof, consider the phase diagram in the strip $[0,1] \times
(0,\infty)$ of the positive solutions $(w(t),w'(t))$ to \eqref{e:scalar}.
If we let $x = w(t)$, then the total elapsed time to reach $x = 1$ is
$$r = \int_0^1 \frac{\dd t}{\dd x}\dd x
    = \int_0^1 \frac{\dd x}{w'(w^{-1}(x))},$$
which is a positive integral.  On the other hand, if $w_1$ and $w_2$
are two distinct solutions with
$$w_1(0) = w_2(0) = 0 \qquad w'_1(0) > w'_2(0),$$
then the solutions cannot intersect in the phase diagram; we must have
$$w'_1(w_1^{-1}(x)) > w'_2(w_2^{-1}(x)) > 0.$$
So two distinct, positive solutions to \eqref{e:scalar} cannot reach $w(t)
= 1$ at the same time, which means with given the boundary conditions that
there is only one solution.  Thus, the diagonal entries $\lambda_k(t)$ of
$Y(t)$ are all equal.  In conclusion, $Y$, $A$, and $R$ all are isotropic
at the minimum of the logarithmic candle derivative $(\log s(r))'$.
This additional property implies the estimate for $(\log s(r))'$ immediately.
(Note that when $R$ is isotropic, the hypothesis becomes equivalent to
$K\le \kappa$, the usual assumption of G\"unther's inequality.)

\bibliography{dg}

\end{document}